\documentclass[12pt,a4paper]{amsart}

\topmargin=0mm
\textheight=230mm
\oddsidemargin=-14pt
\evensidemargin=-14pt
\textwidth=170mm
\footskip=24pt

\usepackage{amsmath}
\usepackage{amssymb}
\usepackage{amsthm}
\usepackage{color}
\usepackage{enumerate}
\usepackage[dvips]{graphicx}
\usepackage{float}
\usepackage{wrapfig}
\usepackage{layout}
\usepackage{comment}
\newcommand{\RN}{\mathbb{R}^N}

%\newcommand{\N}{\mathbb{N}}

%%%%%%%%%%%%%%%%%%%%%%%%%
\def\R{\mathbb{R}}

\def\cF{\mathcal{F}}

\def\bye{\end{document}}
\def\by{\end{proof}\bye}

\def\hello{\begin{document}}
\def\fr{\frac} 
\def\disp{\displaystyle}  
\def\ga{\alpha}     
\def\go{\omega}
\def\gep{\varepsilon}      
\def\ep{\gep}    
\def\mid{\,:\,}   
\def\gb{\beta} 
\def\gam{\gamma}
\def\gd{\delta}
\def\gz{\zeta} 
\def\gth{\theta}   
\def\gk{\kappa} 
\def\gl{\lambda}
\def\gL{\Lambda}
\def\gs{\sigma}   
\def\gf{\varphi}                  
\def\tim{\times}                        
\def\aln{&\,}
\def\ol{\overline}
\def\ul{\underline}           
\def\pl{\partial}
\def\hb{\text}                
\def\cF{\mathcal{F}}
\def\Int{\mathop{\text{int}}}

\def\gG{\varGamma}
\def\lan{\langle}
\def\ran{\rangle}
\def\cD{\mathcal{D}}
\def\cB{\mathcal{B}}
\def\bcases{\begin{cases}}
\def\ecases{\end{cases}}
\def\balns{\begin{align*}}
\def\ealns{\end{align*}}
\def\balnd{\begin{aligned}}
\def\ealnd{\end{aligned}}
\def\1{\mathbf{1}}
\def\bproof{\begin{proof}}

\def\eproof{\end{proof}}

\theoremstyle{definition}
\newtheorem{definition}{Definition}%[section]
\theoremstyle{plain}
\newtheorem{theorem}[definition]{Theorem}
\newtheorem{corollary}[definition]{Corollary}
\newtheorem{lemma}[definition]{Lemma}
\newtheorem{proposition}[definition]{Proposition}
\theoremstyle{remark}
\newtheorem{remark}[definition]{Remark}
\newtheorem{notation}[definition]{Notation}

\def\red#1{\textcolor{red}{#1}}
\def\blu#1{\textcolor{blue}{#1}}
\def\beq{\begin{equation}}
\def\eeq{\end{equation}}
\def\bthm{\begin{theorem}}
\def\ethm{\end{theorem}}
\def\bproof{\begin{proof}}
\def\eproof{\end{proof}}

%%%%%%%%%%%%%%%%% lbar
\newcommand{\lbar}[1]{\mkern 1.9mu\overline{\mkern-1.9mu#1\mkern-0.1mu}
\mkern 0.1mu}
%%%%%%%%%%%%%%%%%%%%%% 
%\usepackage{showkeys}
\long\def\/*#1*/{}
\def\Gth{\varTheta}
\def\gD{\varDelta}
\def\gG{\varGamma}
\def\what{\widehat}
\def\hM{\what M}
\def\eqr#1{\eqref{#1}}
\def\tin{\hbox{ in }}
\def\ton{\hbox{ on }}
\def\bmat{\begin{pmatrix}}
\def\emat{\end{pmatrix}}

\newcommand{\Pmo}{\mathcal{P}^-_{1}}
\newcommand{\Ppo}{\mathcal{P}^+_{1}}
\newcommand{\Pmk}{\mathcal{P}^-_{k}}
\newcommand{\Ppk}{\mathcal{P}^+_{k}}

\newcommand{\Rn}{{\mathbb R}^N}

\title{Towards a reversed Faber-Krahn inequality for the truncated Laplacian}
\author{Isabeau Birindelli}
 \address[I. Birindelli]{Dipartimento di Matematica \lq\lq G. Castelnuovo\rq\rq\newline
\indent Sapienza Universit\`a  di Roma, P.le Aldo  Moro 2, I--00185 Roma, Italy.\newline
\indent isabeau@mat.uniroma1.it}
%\email{isabeau@mat.uniroma1.it}
\author{Giulio Galise}
 \address[G. Galise]{Dipartimento di Matematica \lq\lq F. Enriques\rq\rq\newline
\indent  Universit\`a  di Milano \\
 \indent   Via Cesare Saldini 50 - 20133 Milan, Italy.\newline
\indent Giulio.Galise@unimi.it}
\author{Hitoshi Ishii}
\address[H. Ishii]{Faculty of education and Integrated Arts and Sciences\newline
\indent Waseda University\\
 \indent 1-6-1 Nishi-Waseda, Shinjuku, Tokyo 169-8050 Japan.\newline
\indent hitoshi.ishii@waseda.jp}

%\author[I. Birindelli,  G. Galise, H. Ishii]{Isabeau Birindelli, Giulio Galise, Hitoshi Ishii}
%\affil[1]{Dipartimento di Matematica, Sapienza Universit\`a di Roma  }
%\affil[2]{Dipartimento di Matematica, Universit\`a degli Studi di Milano  }
%\affil[3]{Waseda University  }

\begin{document}
\begin{abstract}
We consider the nonlinear eigenvalue problem, with Dirichlet boundary condition, for a class of very degenerate elliptic operators, with the aim to show that, at least for square type domains having fixed volume, the symmetry of the domain maximize the principal eigenvalue, contrary to what happens for the Laplacian.

\bigskip
\noindent
\emph{2010 Mathematical Subject Classification}: 35J70, 35P30.
\end{abstract}

\maketitle

\section{Introduction}\label{intro}
Let us recall that if $\Omega$ is a strictly convex domain and $\lambda_N(X)$ indicates the largest eigenvalue of the symmetric matrix $X$ then there exists $\mu_1^+>0$ and $\varphi(\cdot)>0$ in $\Omega$ such that
$$\left\{\begin{array}{rl}
\lambda_N(D^2\varphi)+\mu_1^+\varphi=0 & \mbox{in}\ \Omega\\
\varphi =0 & \mbox{on}\ \partial\Omega.
\end{array}
\right.
$$
This was proved in \cite{BGI}. With a little abuse, but for obvious reasons, we called $\mu_1^+$ and $\varphi$ 
respectively the principal eigenvalue and eigenfunction for the 
operator $\Ppo(D^2u)=\lambda_N(D^2u)$ in $\Omega$. The value $\mu_1^+$ shares many features with 
$\mu(\Delta)$ 
the principal  eigenvalue of the Laplacian with homogenous Dirichlet conditions, e.g. the fact that $\mu_1^+$ is a 
barrier for the validity of the maximum principle. But,  strikingly, also many differences.  
We naturally wondered if other qualitative properties could be extended from $\mu(\Delta)$ to $\mu^+_1$.

Let us start by stating our most surprising result. 
\begin{quote}
\lq\lq \emph{Among rectangles with given measure the square has the {\bf largest} eigenvalue $\mu_1^+$ and the eigenvalue of the ball of same measure will be even larger than that of the square.}\rq\rq
\end{quote}
This is surprising since, as it is well known, on the contrary, for $\mu(\Delta)$ the principal eigenvalue of the 
Laplacian, the Faber-Krahn inequality states that
\begin{quote} 
\lq\lq \emph{Among domains with given measure the ball has the {\bf smallest} eigenvalue $\mu(\Delta)$}\rq\rq
\end{quote}
which, in its much weaker form, reduces to the obvious fact
\begin{quote}
\lq\lq \emph{Among rectangles with given measure the square has the {\bf smallest} eigenvalue $\mu(\Delta)$.}\rq\rq
\end{quote}
In \cite{BGI} we consider a more general class of operators, sometimes called \emph{truncated Laplacian}, which we now describe.
For any $N\times N$ symmetric matrix $X$, let 
\begin{equation}\label{arranged eigenvalues}
\lambda_1(X)\leq \lambda_2(X)\leq\cdots\leq \lambda_N(X) 
\end{equation}
be the ordered eigenvalues of $X$. For $k\in[1,N]$, $k$ integer, let 
\begin{equation}\label{def1}
\Pmk(D^2u)=\sum_{i=1}^k\lambda_i(D^2u) \qquad\text{and}\qquad \Ppk(D^2u)=\sum_{i=1}^k\lambda_{N+1-i}(D^2u).
\end{equation}
For $k=N$ these operators coincide with the Laplacian, hence we will always consider $k<N$. 
We want to emphasize that they are fully nonlinear elliptic operators that are degenerate at every point and in every direction. 

The truncated Laplacian initially appears in Sha \cite{Sha1,Sha2} and Wu \cite{Wu} in order to investigate compact 
manifolds having \emph{$k$-convex} boundary, i.e. such that the sum of any $k$ principal curvature functions is 
positive. Later the operators ${\mathcal P}^\pm_k$ can be found in \cite{AS}, where Ambrosio and Soner developed a 
level set theory to the  the mean curvature evolution of surfaces with arbitrary codimension. More recently we wish to 
recall the theory of subequations of Harvey and Lawson, see e.g.  \cite{HL1, HL2}, which give a new geometric 
interpretation of  solutions,  and the works of Caffarelli, Li and Nirenberg \cite{CLN1, CLN3}  concerning removable 
singularities along smooth manifolds for Dirichlet problems associated to $\Pmk$. The 
extended version of the maximum principle and the study of positive solutions has been done in \cite{AGV,GV,G}, see also \cite{CDLV} in the case of entire solutions. The case $k=1$ is treated in the nice paper of Oberman and Silvestre \cite{OS} about convex envelope. 
Blanc and Rossi in \cite{BR} consider a similar class of operators, when one takes just one eigenvalue of the Hessian 
matrix, but not necessarily the first or last one.

Following Berestycki, Nirenberg, Varadhan \cite{BNV}, one can define a \lq\lq candidate\rq\rq\ for the principal 
eigenvalue:
$$\mu_k^-=\sup\{\mu\in\R, \ \exists\ \phi>0\ \mbox{in}\ \Omega, \Pmk(D^2\phi)+\mu \phi\leq 0\},$$
or
$$\mu_k^+=\sup\{\mu\in\R, \ \exists\ \phi>0\ \mbox{in}\ \Omega, \Ppk(D^2\phi)+\mu \phi\leq 0\}.$$
Interestingly, $\mu_k^-=+\infty$ for any bounded domain $\Omega$, while $\mu_k^+<+\infty$. 
Hence we will concentrate on the latter. As recalled above, in \cite{BGI} the existence of an eigenfunction was done 
only for $k=1$ and when $\Omega$ is strictly convex. 

\medskip
Observe that studying $\mu_1^+$ in rectangles had a triple interest, on one hand we wished to see, in the simplest 
case, if the strict convexity was a necessary condition for the existence of the eigenfunction. 
On the other hand we hoped  to construct eigenfunctions for $k>1$. Finally, it was a way to see if one could expect 
some relationship between the symmetry of the domain and the size of the principal eigenvalue, as in Faber-Krahn 
inequalities. We shall now discuss what we have obtained in these three directions.

\medskip
On this third point we have seen at the beginning that one should, if anything, expect a reversed Faber-Krahn 
inequality. We wish to point out another feature that cannot be extend from $\mu(\Delta)$ to $\mu_1^+$, it is the famous result of Lieb. He showed,  in \cite{L}, that if $A, B\subset\Rn$ are two bounded domains, then 
\begin{equation}\label{L0}
\inf_{x\in\Rn}\mu\left(\Delta, A\cap B_x\right)<\mu(\Delta, A)+\mu(\Delta, B),
\end{equation}
 $\mu(\Delta,\Omega) $ being the principal eigenvalue of the Laplacian with Dirichlet boundary conditions in $\Omega$ and $B_x=x+B$ denoting $B$ translated by $x\in\Rn$.

The inequality \eqref{L0} is not true in general for $\mu^+_1$ , actually it is reversed if $A$ and $B$ are some specific 
rectangles. 

\medskip
Concerning the first point, the remark we need to make is that, even though rectangles are not strictly convex, in Theorem \ref{Faber-Krahn}  
we construct explicitly an eigenfunction and its corresponding eigenvalue;
the eigenfunction is a product of functions of one variable. The proof is not at all obvious but it uses only elementary   
tools from linear algebra and ode.

The question of whether the condition on the strict convexity is necessary for the 
existence of the eigenfunction $\varphi$ was raised in \cite{BGI}. It was related in particular with the fact that 
we could prove global Lipschitz  regularity for the Dirichlet problem under that hypothesis. 

Let us observe that the eigenfunctions that we construct are indeed only H\"older 
continuous up to the boundary which confirms that in general, in order to get Lipschitz regularity up to the boundary, 
the hypothesis of the strict convexity cannot be removed.
In this paper, thanks to the eigenfunctions in squares that we have constructed, we extend the regularity results to domains that are convex but not necessarily strictly convex. Indeed in that case, we shall prove that,
under the condition that near the boundary the forcing term 
is not \lq\lq too\rq\rq\ negative, the solution of the Dirichlet problems exists and it  is  H\"older continuous up to the 
boundary.  This is done in Theorem \ref{holder}.  

On the other hand it is not clear if the condition which we require on the forcing 
term is necessary. 
For example, suppose that  $f\leq-1$ in  some domain ${\Omega}$  which is not strictly convex; can we expect  that 
there are  solutions of
$$\Ppo(D^2u)=f \;\ \mbox{in}\ \Omega,\ u=0  \ \mbox{on}\ \partial\Omega?$$
We expect the answer to be negative.
%It is worth to point out that the global H\"older regularity  extends to supersolutions which are nonpositive in $\Omega$, but without assuming the convexity of $\Omega$. On the other hand we construct an explicit example   showing that, even for $\Omega$ convex, the  $C^{0,\gamma}(\overline\Omega)$-regularity may fail for any $\gamma\in(0,1]$  in the class of positive supersolutions. 
\medskip

Concerning $k\geq1$, remark that if $\Omega=B_\rho\subset\RN$ we can construct the eigenvalue $\mu^+_k$ of 
$\Ppk$ and a corresponding eigenfunction in term of those of the Laplacian in space dimension $k$. 

Let $\phi(x):=v(|x|)$ and $\mu(\Delta)$ be respectively the eigenvalue and the eigenfunction of the Laplacian in the 
ball of radius $\rho$ in ${\mathbb R}^k$. Hence $v$ satisfies:
\begin{equation}
\left\{
\begin{array}{rl}
v''(r)+\frac{k-1}{r}v'(r)+\mu(\Delta) v(r)=0 & \text{for $r\in(0,\rho)$}\\
v'(0)=0,\;v(\rho)=0.\\
\end{array}
\right.
\end{equation}
Since $v'\leq0$, arguing as in \cite{BGL}: 
$$
\left(v''(r)-\frac{v'(r)}{r}\right)'\geq-\frac{k}{r}\left(v''(r)-\frac{v'(r)}{r}\right)
$$
and $v''(r)\geq\frac{v'(r)}{r}$ for any $r\in(0,\rho)$. Set $u(x)=v(|x|)$ for $x\in B_\rho$, then
$$
\Ppk(D^2u(x))+\mu(\Delta) u(x)=v''(|x|)+\frac{k-1}{|x|}v'(|x|)+\mu(\Delta) v(|x|)=0,\qquad x\in B_\rho.
$$
This implies that
\begin{equation}\label{Pk-Delta}
\mu^+_k=\mu(\Delta)
\end{equation} and answers the question that there are at least some domains for which the
principal eigenfunctions exists even for $k>1$.
On the other hand, for the rectangles we don't know if there is a corresponding eigenfunction. Indeed, contrarily to the 
case $k=1$ and $k=N$, we prove that for $k=2$, $\ldots$, $N-1$,
if it exists, the eigenfunction cannot be a function which is the product of functions of one variable. 
It is worth pointing out that other fully nonlinear operators for which this is true, 
are the Pucci extremal operators. This was proved in \cite{BL}.

\medskip

The paper is organized in the following way. The next section is preliminary, instead in section \ref{efs} we 
construct the explicit eigenfunctions for $k=1$ and we treat also the case $k>1$. Section \ref{regh} is devoted to existence and the H\"older regularity in convex 
domain of the Dirichlet problem.

\section{Preliminaries}\label{prel}
We denote by $\mathbb S^N$ the set of   $N\times N$ symmetric real matrices equipped with its usual partial order. The eigenvalues of $X\in\mathbb S^N$ will be henceforth arranged in the nondecreasing order \eqref{arranged eigenvalues}. The norm of $X$ is
$$
\left\|X\right\|=\max_{i=1,\ldots,N}\left|\lambda_i(X)\right|.
$$
The operators ${\mathcal{P}}^\pm_k$, which are fully nonlinear degenerate elliptic operators, can be equivalently defined either by the partial sums \eqref{def1} or by the representation formulas
\begin{equation}\label{representation formulas}
\begin{split}
\Pmk(X)&=\min\left\{\sum_{i=1}^k\left\langle Xv_i,v_i\right\rangle\,|\,\text{$v_i\in\Rn$ and $\left\langle v_i,v_j\right\rangle=\delta_{ij}$, for $i,j=1,\ldots,k$}\right\}\\
\Ppk(X)&=\max\left\{\sum_{i=1}^k\left\langle Xv_i,v_i\right\rangle\,|\,\text{$v_i\in\Rn$ and $\left\langle v_i,v_j\right\rangle=\delta_{ij}$, for $i,j=1,\ldots,k$}\right\}.
\end{split}
\end{equation}
From \eqref{representation formulas} one deduce the inequalities
$$
\Pmk(X-Y)\leq{\mathcal P}^\pm_k(X)-{\mathcal P}^\pm_k(Y)\leq\Ppk(X-Y)
$$
and the Lipschitz continuity of ${\mathcal P}^\pm_k:\mathbb S^N\mapsto\R$: for any $X,Y\in\mathbb S^N$ 
\begin{equation}\label{continuity}
\left|{\mathcal P}^\pm_k(X)-{\mathcal P}^\pm_k(Y)\right|\leq k\left\|X-Y\right\|.
\end{equation}

The following elementary linear algebra Lemma will play a key role. 
\begin{lemma}\label{linear algebra}
Let $a,b\in\R$ and let us consider the symmetric matrix
\begin{equation}\label{matrix M}
M(a,b)=
\left(
\begin{array}{ccccc}
a & b & b & \ldots & b\\
b & a &   b & \ldots & b\\
\vdots & \vdots & \ddots & \vdots & \vdots\\
b & \ldots & b & a &b\\
b & b & \ldots &  b & a
\end{array}
\right).
\end{equation}
Then, for $b\neq 0$, the eigenvalues of $M(a,b)$ are
\begin{itemize}
	\item $a-b$ with multiplicity $N-1$ and and its eigenspace is $V=\left\{x\in\Rn\,:\,\sum_{i=1}^Nx_i=0\right\}$;
	\item $a+(N-1)b$ which is simple and its eigenspace $V^\bot$ spanned by $(1,\ldots,1)^\mathsf{T}$.
\end{itemize} \end{lemma}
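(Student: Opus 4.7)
The plan is to reduce the computation to the spectrum of the all-ones matrix, for which the diagonalization is classical. The key observation is the decomposition
\begin{equation*}
M(a,b) = (a-b)I_N + bJ,
\end{equation*}
where $I_N$ is the $N\times N$ identity and $J$ is the matrix whose every entry equals $1$. This identity is immediate from comparing entries.

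Next I would analyze $J$. Writing $\mathbf{1}=(1,\ldots,1)^{\mathsf T}$, one has $J=\mathbf{1}\mathbf{1}^{\mathsf T}$, hence $Jx = \bigl(\sum_{i=1}^N x_i\bigr)\mathbf{1}$ for every $x\in\R^N$. Consequently the kernel of $J$ is exactly $V=\{x\in\R^N:\sum_i x_i=0\}$, of dimension $N-1$, while $\mathbf{1}$ is an eigenvector of $J$ with eigenvalue $N$. Since $\R^N=V\oplus V^\bot$ with $V^\bot=\mathrm{span}(\mathbf{1})$, these two eigenspaces exhaust $\R^N$, and $J$ has spectrum $\{0,N\}$ with multiplicities $N-1$ and $1$ respectively.

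Because $M(a,b)$ is a linear combination of $I_N$ and $J$, it is simultaneously diagonalized in the same basis. For $x\in V$ one has $M(a,b)x=(a-b)x$, and $M(a,b)\mathbf{1}=(a-b)\mathbf{1}+bN\mathbf{1}=(a+(N-1)b)\mathbf{1}$. This yields both claimed eigenvalues with the stated eigenspaces. The hypothesis $b\neq 0$ is what guarantees that $a-b\neq a+(N-1)b$, so the multiplicities are genuinely $N-1$ and $1$ rather than collapsing into a single eigenvalue (when $b=0$ we simply have $M(a,0)=aI_N$, with $a$ of multiplicity $N$).

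There is no real obstacle here: the lemma is a standard rank-one update computation. The only mild point of attention is tracking the role of $b\neq 0$ in making the two eigenvalues distinct, which is what justifies the multiplicity claim.
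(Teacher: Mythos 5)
Your proof is correct. The paper does not actually supply a proof of this lemma (it is stated without argument as an elementary fact), so there is nothing to compare against; your decomposition $M(a,b)=(a-b)I_N+bJ$ with $J=\mathbf{1}\mathbf{1}^{\mathsf T}$ is the standard and complete way to establish it, and you correctly flag that $b\neq 0$ is exactly what keeps the two eigenvalues distinct.
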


\section{Construction of eigenfunctions.}\label{efs}

\subsection{Cube} Let $Q_{2R}$ be the $N$-dimensional open cube with center $0$ and side length $2R$, i.e.
$$
Q_{2R}=(-R,R)^N\,.
$$ 
We start by computing the principal eigenvalue $\mu^+_1$ in $Q_{2R}$ for the operator $\Ppo$ by constructing a 
positive eigenfunction having the multiplicative form 
\begin{equation}\label{multiplicative form}
u(x)=\prod_{i=1}^N f(x_i)\qquad x\in Q_{2R},
\end{equation}
with $f$ a positive smooth function to be determined. By homogeneity we assume $u(0)=1$, hence $f(0)=1$. To find out $f$, we compute
$$
\begin{array}{ll}
\displaystyle\partial_{ii}u(x)=f''(x_i)\prod_{k\neq i}f(x_k)&\text{for $i=1,\ldots,N$}\\
\displaystyle\partial_{ij}u(x)=f'(x_i)f'(x_j)\prod_{k\neq i,j}f(x_k)&\text{for $i,j=1,\ldots,N$ and $i\neq j$}.
\end{array}
$$
In particular on the diagonal ${\mathcal D}=\left\{x\in Q_{2R}\,:\,x_1=\ldots=x_N\right\}$ we deduce that
$$
D^2u(x)=f^{N-2}(x_1)\cdot M\left(f''(x_1)f(x_1),(f'(x_1))^2\right),
$$
where the matrix $M$ is given by $\eqref{matrix M}$.
%$$
%D^2u(x)=\left(
%\begin{array}{cccc}
%f''(x_1)(f(x_1))^{N-1} & (f'(x_1))^2(f(x_1))^{N-2} &  \ldots & (f'(x_1))^2(f(x_1))^{N-2}\\
%(f'(x_1))^2(f(x_1))^{N-2} & f''(x_1)(f(x_1))^{N-1}  & \ldots & (f'(x_1))^2(f(x_1))^{N-2}\\
%\vdots & \vdots & \ddots & \vdots \\
%(f'(x_1))^2(f(x_1))^{N-2}& \cdots &(f'(x_1))^2(f(x_1))^{N-2}& f''(x_1)(f(x_1))^{N-1}
%\end{array}\right).
%$$
Using Lemma \ref{linear algebra} with $a=f''(x_1)f(x_1)$ and $b=(f'(x_1))^2\geq 0$, we have 
$$
\Ppo(D^2u(x))=f^{N-2}(x_1)\cdot\big{(}f''(x_1)f(x_1)+(N-1)(f'(x_1))^2\big{)}\qquad\text{for $x\in{\mathcal D}$}.
$$
In particular 
\begin{equation}
\Ppo(D^2u(x))+\mu u(x)=0 \quad \text{for $x\in\mathcal D$}\\
\end{equation}
if and only if,
\begin{equation}\label{eq1}
\left\{
\begin{array}{rl}
f''(t)f(t)+(N-1)(f'(t))^2+\mu f^2(t)=0 & t\in(-R,R)\\
f(-R)=f(R)=0.& 
\end{array}
\right.
\end{equation}
Which is equivalent to
$$
\left\{
\begin{array}{rl}
(f^N)''(t)+N\mu f^N(t)=0 & t\in(-R,R)\\
f(-R)=f(R)=0& 
\end{array}
\right.
$$
and
\begin{equation}\label{eq2}
\mu=\frac1N\left(\frac{\pi}{2R}\right)^2,\qquad f(t)=\sqrt[N]{\cos(\frac{\pi}{2R}t)}.
\end{equation}
Now we need to prove that for such $f$, the function $u$ given by \eqref{multiplicative form} is in turn a solution in the 
whole cube $Q_{2R}$. By means of the representation formula \eqref{representation formulas} this is equivalent to show that
\begin{equation}\label{cvd}
\max_{v\in\Rn\backslash\left\{0\right\}}\frac{\langle D^2u(x)v,v\rangle}{|v|^2}=-\mu u(x)\qquad\forall x\in Q_{2R}.
\end{equation}
Let $x\in Q_{2R}$ and let $v=(f(x_1)\xi_1,\ldots,f(x_N)\xi_N)^\mathsf{T}$, with $\xi_1,\ldots,\xi_N\in\R$ and such that $|v|\neq0$. Then 
$$
\langle D^2u(x)v,v\rangle=\left[\sum_{i=1}^Nf''(x_i)f(x_i)\xi_i^2+2\sum_{i>j}f'(x_i)f'(x_j)\xi_i\xi_j\right]u(x)
$$
and using \eqref{eq1}
\begin{equation}\label{inequality}
\begin{split}
\langle D^2u(x)v,v\rangle&=-\left[\mu\sum_{i=1}^Nf^2(x_i)\xi_i^2+(N-1)\sum_{i=1}^N(f'(x_i))^2\xi_i^2-2\sum_{i>j}f'(x_i)f'(x_j)\xi_i\xi_j\right]u(x)\\
&=-\left[\mu|v|^2+\sum_{i>j}\left(f'(x_i)\xi_i-f'(x_j)\xi_j\right)^2\right]u(x)\\
&\leq-\mu u(x)|v|^2.
\end{split}
\end{equation}
Taking the supremum over $(\xi_1,\ldots,\xi_N)\neq (0,\ldots,0)$ we deduce that
$$
\max_{v\in\Rn\backslash\left\{0\right\}}\frac{\langle D^2u(x)v,v\rangle}{|v|^2}\leq-\mu u(x).
$$
Let
\begin{equation}\label{cond}
\tilde D=\{x\in Q_{2R}, \ \exists\ i_0\in\left\{1,\ldots,N\right\}\quad \text{s.t.}\quad \prod_{j\neq i_0}x_j\neq0\}.
\end{equation}Setting
$
\xi_i=\prod_{j\neq i}f'(x_j)
$,
%we have
let 
$$\hat v=(f(x_1)\prod_{j\neq 1}f'(x_j),\ldots,f(x_N)\prod_{j\neq N}f'(x_j))^\mathsf{T}.$$ 
 For $x\in \tilde D$, we have $\hat v\cdot e_{i_0}\neq0$ since the only zero of $f'(t)$ in $(-R,R)$ is $t=0$. In this way $|\hat v|>0$ and 
\begin{equation}\label{eq3}
\langle D^2u(x)\hat v,\hat v\rangle=-\mu u(x)|\hat v|^2.
\end{equation}
In view of \eqref{inequality}, \eqref{eq3}, then for every $x\in \tilde D$
\begin{equation}\label{eq3'}
\Ppo(D^2u(x))+\mu u(x)=0\,.
\end{equation} 
By continuity, see \eqref{continuity}, equality \eqref {eq3'} continues to be true in the whole cube. 
%If instead the condition \eqref{cond} is not satisfied, then there exists $i_1,i_2\in\left\{1,\ldots,N\right\}$ such that $f(x_{i_1})=f(x_{i_2})=0$. Choosing now $\hat v\neq0$ and such that $\hat v_i=0$ for any $i\neq i_1,i_2$,   then \eqref{eq3} still holds. This and  the inequality \eqref{inequality}  imply the condition \eqref{cvd}.
Summing up we have obtained the following 

\begin{theorem}\label{theorem1}
The  principal eigenvalue of $\Ppo$ in the cube $Q_{2R}$ is 
\begin{equation}\label{eigenvalue cube}
\mu^+_1=\frac{1}{N}\left(\frac{\pi}{2R}\right)^2
\end{equation}
and a corresponding principal eigenfunction if given by 
$\displaystyle u(x)=\prod_{i=1}^N\sqrt[N]{\cos\left(\frac{\pi}{2R}x_i\right)}$.
\end{theorem}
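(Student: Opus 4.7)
The plan is to construct a positive eigenfunction in the multiplicative form $u(x)=\prod_{i=1}^N f(x_i)$ on $Q_{2R}$ and determine the pair $(\mu,f)$ by forcing $u$ to solve the eigenvalue equation. This ansatz reduces the problem to a one-variable ODE; the remaining issue is then to verify that the resulting $u$ satisfies the fully nonlinear PDE everywhere in $Q_{2R}$, not just on the diagonal where the symmetry makes the computation easy.

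First, I would compute $D^2u$ on the diagonal $\mathcal{D}=\{x_1=\cdots=x_N\}$. By the symmetry of the ansatz the Hessian at such a point is (up to the scalar factor $f^{N-2}(x_1)$) a matrix of the type $M(a,b)$ in Lemma \ref{linear algebra}, with $a=f''(x_1)f(x_1)$ and $b=(f'(x_1))^2\ge 0$. Since $b\ge 0$, the lemma identifies the largest eigenvalue as $a+(N-1)b$, so the eigenvalue equation on $\mathcal{D}$ reduces to $f''f+(N-1)(f')^2+\mu f^2=0$, i.e.\ $(f^N)''+N\mu f^N=0$ with Dirichlet conditions at $\pm R$. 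The smallest $\mu$ admitting a positive solution yields \eqref{eq2}.

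Second, I must verify that this candidate $u$ solves $\Ppo(D^2u)+\mu u=0$ on the whole cube. By the representation formula \eqref{representation formulas} this amounts to showing $\max_{v\ne 0}\langle D^2u(x)v,v\rangle/|v|^2=-\mu u(x)$. Plugging in a test vector of the form $v=(f(x_1)\xi_1,\ldots,f(x_N)\xi_N)^\mathsf{T}$ and invoking the ODE gives, after rearrangement,
$$
\langle D^2u(x)v,v\rangle=-\left[\mu|v|^2+\sum_{i>j}\bigl(f'(x_i)\xi_i-f'(x_j)\xi_j\bigr)^2\right]u(x),
$$
from which the upper bound $\le -\mu u(x)|v|^2$ is immediate and, since $f>0$ on $(-R,R)$, covers every direction $v\in\Rn$. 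For the matching lower bound on the set $\tilde D$ of \eqref{cond}, I would pick $\xi_i=\prod_{j\ne i}f'(x_j)$: this cancels every cross-difference, and keeps $v\ne 0$ because at least $N-1$ of the $x_j$ are nonzero while the only interior zero of $f'$ is at the origin. The Lipschitz continuity \eqref{continuity} of $\Ppo$ then extends the identity from the dense set $\tilde D$ to all of $Q_{2R}$.

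The principal difficulty lies in the second step: exhibiting an explicit maximizer of the representation formula at generic points. The symmetric choice $\xi_i=\prod_{j\ne i}f'(x_j)$ is precisely what makes every cross-difference vanish, but it is effective only on $\tilde D$, and one must rely on continuity to absorb the exceptional lower-dimensional set. Once the identity holds on all of $Q_{2R}$, the positivity of $u$ together with the Berestycki--Nirenberg--Varadhan characterization of $\mu^+_1$ recalled in the introduction confirms that $\mu=\frac{1}{N}(\pi/2R)^2$ is indeed the \emph{principal} eigenvalue.
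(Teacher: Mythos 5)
Your proposal follows the same route as the paper: the multiplicative ansatz, the reduction to the one-dimensional ODE via Lemma~\ref{linear algebra} on the diagonal, the substitution $v_i=f(x_i)\xi_i$ to obtain the pointwise upper bound $\langle D^2u(x)v,v\rangle\le-\mu u(x)|v|^2$, the choice $\xi_i=\prod_{j\ne i}f'(x_j)$ to realize equality on $\tilde D$, and the continuity argument \eqref{continuity} to pass to all of $Q_{2R}$. The only (welcome) addition is your closing remark explicitly invoking the BNV characterization to identify $\mu$ with $\mu_1^+$, a step the paper leaves implicit.
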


Conversely to what one could expect, the only cases in which  the eigenvalue problem 
\begin{equation}\label{eigenvalue problem}
\left\{\begin{array}{rl}
\Ppk(D^2u)+\mu u=0 & \text{in $Q_{2R}$}\\
u>0 & \text{in $Q_{2R}$}\\
u=0 & \text{on $\partial Q_{2R}$}
\end{array}\right.
\end{equation}
has a solution of type \eqref{multiplicative form} are $k=1$ and $k=N$. This is proved in the following 

\begin{theorem}
Let $2\leq k\leq N-1$ and let us assume that $u$ is a solution of \eqref{eigenvalue problem}. Then there are no functions $f\in C^2(-R,R)$ such that $u(x)=\prod_{i=1}^Nf(x_i)$. 
\end{theorem}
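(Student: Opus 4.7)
The plan is to argue by contradiction: assume $u(x)=\prod_{i=1}^N f(x_i)$ solves \eqref{eigenvalue problem}, then show that the diagonal restriction forces $f$ and $\mu$ to be essentially unique, but that this candidate violates the equation at another test point unless $k\in\{1,N\}$. Since $u>0$ in $Q_{2R}$ and vanishes on $\partial Q_{2R}$, we necessarily have $f>0$ in $(-R,R)$ and $f(\pm R)=0$.

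First, along the diagonal $\mathcal D$ the computation from the proof of Theorem \ref{theorem1} applies verbatim: $D^2u(x)=f(t)^{N-2}M\bigl(f''(t)f(t),(f'(t))^2\bigr)$, and Lemma \ref{linear algebra} gives a simple top eigenvalue $f(t)^{N-2}[f''(t)f(t)+(N-1)(f'(t))^2]$ and a second eigenvalue $f(t)^{N-2}[f''(t)f(t)-(f'(t))^2]$ with multiplicity $N-1$. Summing the $k$ largest yields the ODE
\[
kf(t)f''(t)+(N-k)(f'(t))^2+\mu f^2(t)=0,\qquad f(\pm R)=0,
\]
which under the substitution $g=f^{N/k}$ linearizes to $g''+\frac{N\mu}{k^2}g=0$, $g(\pm R)=0$, $g>0$. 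Uniqueness of the first Dirichlet eigenvalue of $-\partial_t^2$ on $(-R,R)$ forces
\[
\mu=\frac{k^2}{N}\Bigl(\frac{\pi}{2R}\Bigr)^2,\qquad f(t)=c\,\sqrt[N/k]{\cos\Bigl(\tfrac{\pi}{2R}t\Bigr)},
\]
which, after scaling $u$, we may take with $c=1$. In particular $f$ is even, so $f'(0)=0$.

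Next I would test the PDE at the auxiliary point $P_t=(t,\dots,t,0,\dots,0)\in Q_{2R}$ with $m:=N-k+1$ entries equal to $t\in(-R,R)\setminus\{0\}$ and $k-1$ entries equal to $0$. Because $f'(0)=0$, every mixed partial of $u$ at $P_t$ involving a coordinate from the last $k-1$ slots vanishes, so $D^2u(P_t)$ is block-diagonal: the first block is $f(t)^{m-2}M(f''(t)f(t),(f'(t))^2)$ and the second is $f''(0)f(t)^m\,I_{k-1}$. Applying Lemma \ref{linear algebra} to the first block, the three distinct eigenvalues are
\[
\alpha=f(t)^{m-2}[f''(t)f(t)+(m-1)(f'(t))^2],\ \beta=f(t)^{m-2}[f''(t)f(t)-(f'(t))^2],\ \gamma=f''(0)f(t)^m,
\]
with multiplicities $1$, $m-1$ and $k-1$ respectively.

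Finally, plugging in $f(t)=\cos^{k/N}(\pi t/(2R))$, setting $a=\pi/(2R)$, $\beta_0=k/N$, and factoring the negative quantity $-a^2\beta_0\cos^{m\beta_0-2}(at)$, the three eigenvalues are proportional to $1-m\beta_0\sin^2(at)$, $1$, and $1-\sin^2(at)$. The key algebraic fact is $m\beta_0=\frac{(N-k+1)k}{N}>1$, equivalent to $(k-1)(N-k)>0$, which holds precisely because $2\le k\le N-1$. Consequently the ordering is $\alpha>\gamma>\beta$, so $\Ppk(D^2u(P_t))=\alpha+(k-1)\gamma$, and a direct simplification produces
\[
\Ppk(D^2u(P_t))+\mu u(P_t)=a^2\beta_0(m\beta_0-1)\sin^2(at)\,\cos^{m\beta_0-2}(at),
\]
which is strictly positive for $t\ne 0$, contradicting the eigenvalue equation. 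The main obstacle is the bookkeeping in this last step: correctly identifying the ordering of the three eigenvalues for every admissible $k$, and checking that the final expression indeed has the advertised sign; the decisive observation is that the hypothesis $2\le k\le N-1$ is exactly what makes $m\beta_0>1$.
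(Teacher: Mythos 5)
Your proof is correct, and it takes a genuinely different route from the paper's. Both arguments begin by restricting to the diagonal to derive the ODE $kff''+(N-k)(f')^2+\mu f^2=0$ and the forced solution $f(t)=\cos^{k/N}(\pi t/(2R))$, $\mu=\frac{1}{N}(\frac{k\pi}{2R})^2$. From there the two diverge. The paper tests the PDE at a point with $x_1=\dots=x_{N-1}$ and $x_N$ chosen so that $f'(x_N)/f(x_N)>f'(x_1)/f(x_1)>0$, then inserts a hand-picked orthonormal family $v_1,\dots,v_k$ aligned with the eigenspaces of $M(\frac{N-k}{k},-1)$ into the variational formula for $\Ppk$; this yields only a strict lower bound $\Ppk(D^2u)>-\mu u$ but suffices for a contradiction. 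You instead test at $P_t=(t,\dots,t,0,\dots,0)$ and exploit $f'(0)=0$ to make $D^2u(P_t)$ block-diagonal, which lets you read off the \emph{entire} spectrum via Lemma \ref{linear algebra}, establish the strict ordering $\alpha>\gamma>\beta$ from $m\beta_0=\frac{(N-k+1)k}{N}>1$ (equivalent to $(k-1)(N-k)>0$, i.e.\ $2\le k\le N-1$), and then compute $\Ppk(D^2u(P_t))+\mu u(P_t)=a^2\beta_0(m\beta_0-1)\sin^2(at)\cos^{m\beta_0-2}(at)>0$ exactly, with no estimates. Your route is more computational but yields an explicit expression for the violation and makes the precise role of the hypothesis $2\le k\le N-1$ transparent; the paper's route avoids computing the full spectrum at the cost of producing only an inequality. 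Both are valid.
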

\begin{proof}
By contradiction let us assume that  $u(x)=\prod_{i=1}^Nf(x_i)$ is a solution of \eqref{eigenvalue problem}. 
Arguing as in the proof of Theorem \ref{theorem1}, we discover that $f$ must satisfy 
\begin{equation}\label{eq4}
kf''(t)f(t)+(N-k)(f'(t))^2+\mu f^{2}(t)=0\qquad\text{for $t\in(-R,R)$}.
\end{equation}
Hence $f(t)=(\cos(\frac{\pi}{2R}t))^{\frac kN}$ and $\mu=\frac{1}{N}\left(\frac{k\pi}{2R}\right)^2$. We claim that the function $$u(x)=\prod_{i=1}^N(\cos(\frac{\pi}{2R}x_i))^{\frac kN}$$ fails to be a solution of 
$$
\Ppk(D^2u(x))+\mu u(x)=0
$$ 
for some $x\in Q_{2R}\backslash\mathcal {D}$. Let $v_1,\ldots,v_k\in \RN$ be such that $v_i\cdot v_j=\delta_{ij}$ for $i,j=1,\ldots,k$.

Using the equation \eqref{eq4}, for $x\in Q_{2R}$:
\begin{equation}\label{eq5}
\begin{split}
\sum_{i=1}^k\left\langle D^2u(x)v_i,v_i\right\rangle&=\sum_{i=1}^k{\Big[}-\frac\mu k-\frac{N-k}{k}\sum_{l=1}^N\frac{(f'(x_l))^2}{f^2(x_l)}(v_i)_l^2\\
&\qquad\quad +2\sum_{l>m}\frac{f'(x_l)}{f(x_l)}\frac{f'(x_m)}{f(x_m)}(v_i)_l(v_i)_m{\Big]}u(x)\\
&=-\left[\mu+\sum_{i=1}^k\left\langle M\left(\frac{N-k}{k},-1\right)w_i,w_i\right\rangle\right]u
\end{split}
\end{equation}
where $$
w_i=\left(\frac{f'(x_1)}{f(x_1)}(v_i)_1,\ldots,\frac{f'(x_N)}{f(x_N)}(v_i)_N\right)^\mathsf{T}
$$ and $M(\frac{N-k}{k},-1)$, see Lemma \ref{linear algebra}, has eigenvalue $\frac{1-k}{k}N<0$ which is simple and $\frac Nk$ with multiplicity $N-1$. Now the idea is to choose $x\in Q_{2R}$ and $v_1,\ldots,v_k$ such that   
$w_1$ is in the eigenspace relative to $-\frac{k-1}{k}N$ and $w_2,\ldots,w_k$ are in the orthogonal eigenspace.\\
Let $\beta>\alpha>0$ be real fixed number . Let $x\in Q_{2R}$ such that $x_1=\ldots=x_{N-1}$ and  
$$
\frac{f'(x_1)}{f(x_1)}=\alpha\,\qquad\frac{f'(x_N)}{f(x_N)}=\beta.
$$
Note that such choice is possible since $\frac{f'(t)}{f(t)}=-\frac{k\pi}{2RN}\tan(\frac{\pi}{2R}t)$ maps the interval $(-R,R)$ onto $\R$.
Setting $\gamma^2=\frac{(\alpha\beta)^2}{(N-1)\beta^2+\alpha^2}$ we define
\begin{equation}\label{v1}
v_1=\gamma\left(\frac1\alpha,\ldots,\frac1\alpha,\frac1\beta\right)^\mathsf{T},
\end{equation}
so that $w_1=\gamma(1,\ldots,1)^\mathsf{T}$ and 
\begin{equation}\label{w1}
\left\langle Mw_1,w_1\right\rangle=-\frac{k-1}{k}N^2\gamma^2.
\end{equation}
Now we consider $k-1$ orthonormal vectors $v_2,\ldots,v_{k}$ of the $(N-2)$-dimensional subspace of $\RN$
$$
V=\left\{v\in\RN\,:\,(v)_1+\ldots+(v)_{N-1}=0,\;(v)_N=0\right\}.
$$ 
In this way $w_2,\ldots,w_k$ belong to the eigenspace relative to $\frac Nk$ %spanned by $(1,\ldots,1)^\mathsf{T}$ 
and 
\begin{equation}\label{wk}
\sum_{i=2}^k\left\langle Mw_i,w_i\right\rangle=\frac{N(k-1)}{k}\alpha^2.
\end{equation}
Since by construction $\left\langle v_i,v_j\right\rangle=\delta_{ij}$ for any $i,j=1,\ldots,k$, we can use \eqref{w1}-\eqref{wk} in \eqref{eq5} to discover that
\begin{equation*}
\begin{split}
\Ppk(D^2u(x))&=\max_{\left\langle v_i,v_j\right\rangle=\delta_{ij}}\sum_{i=1}^k\left\langle D^2u(x)v_i,v_i\right\rangle\\
&=-\left[\mu+\min_{\left\langle v_i,v_j\right\rangle=\delta_{ij}}\sum_{i=1}^k\left\langle Mw_i,w_i\right\rangle\right]u(x)\\
&\geq-\left[\mu+\frac{N(k-1)}{k}\left(\alpha^2-N\gamma^2\right)\right]u(x)
\end{split}
\end{equation*}
and $\left(\alpha^2-N\gamma^2\right)$ is strictly negative by the choice $\beta>\alpha>0$. This contradicts the fact that $u$ is a solution of \eqref{eigenvalue problem} in the whole square.
\end{proof}

\subsection{Reversed baby Faber-Krahn inequality.} Let $R>0$ and let $\alpha=(\alpha_1,\ldots,\alpha_N)$ be such that $\alpha_i>0$ for any $i=1,\ldots,N$. We consider the  $N$-dimensional open rectangle with center $0$ and side lengths $2\alpha_i^{-1}R$, i.e.
$$
{\rm Rect}(\alpha)=\prod_{i=1}^N(-\alpha_i^{-1}R,\alpha_i^{-1}R). 
$$ 
Note that $$|{\rm Rect}(\alpha)|=(2R)^N\prod_{i=1}^N\frac{1}{\alpha_i}=|Q_{2R}|$$
if, and only if, $\prod_{i=1}^N\alpha_1=1$. We are going to show that
\begin{equation}\label{FK}
\textbf{\lq\lq\ The  cube has the largest $\mu_1^+$ among  rectangles with a given measure\ \rq\rq}.\tag{FK}
\end{equation}
\begin{theorem}\label{Faber-Krahn}
Let $\alpha=(\alpha_1,\ldots,\alpha_N)$ be such that $\alpha_i>0$ for $i=1,\ldots,N$. Then the principal eigenvalue of $\Ppo$ in ${\rm Rect}(\alpha)$ is 
\begin{equation}\label{eigenvalue rectangle}
\mu^+_1=\frac{1}{\displaystyle{\frac{1}{\alpha_1^2}+\ldots+\frac{1}{\alpha_N^2}}}\left(\frac{\pi}{2R}\right)^2.
\end{equation}
Moreover there exists $p=(p_1,\ldots,p_N)$, $p_i>-1$ for any $i=1,\ldots,N$, such that 
$$
u(x)=\prod_{i=1}^N\left(\cos(\frac{\pi}{2R}\alpha_ix_i)\right)^{\frac{1}{p_i+1}}
$$
is a principal eigenfunction.
\end{theorem}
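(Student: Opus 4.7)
The plan is to adapt the cube argument of Theorem \ref{theorem1} with the \emph{anisotropic} product ansatz
\[
u(x) = \prod_{i=1}^N f_i(x_i), \quad f_i(t) = \bigl(\cos(\omega_i t)\bigr)^{q_i},
\]
where $\omega_i := \pi\alpha_i/(2R)$, $q_i > 0$, and $p_i := 1/q_i - 1$ are to be determined. Each $f_i$ is positive on $(-\alpha_i^{-1}R, \alpha_i^{-1}R)$, vanishes at the endpoints, and a direct computation shows that it satisfies the natural generalization of \eqref{eq1},
\[
f_i f_i'' + p_i (f_i')^2 + q_i\omega_i^2 f_i^2 = 0.
\]

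Arguing as in \eqref{inequality} with $v = (f_1(x_1)\xi_1, \ldots, f_N(x_N)\xi_N)^\mathsf{T}$ and using the ODE to eliminate $f_i f_i''$ gives
\[
\frac{\langle D^2u(x) v,v\rangle}{u(x)} = -\sum_{i=1}^N q_i\omega_i^2 f_i^2(x_i)\xi_i^2 - \sum_{i=1}^N(p_i+1)\eta_i^2 + \Bigl(\sum_{i=1}^N\eta_i\Bigr)^2,
\]
with $\eta_i := f_i'(x_i)\xi_i$. The first normalization $q_i\omega_i^2 = \mu$ (constant), i.e.\ $q_i = \mu/\omega_i^2$, turns the first sum into $-\mu|v|^2$. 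Since $q_i(p_i+1) = 1$, Cauchy--Schwarz yields
\[
\Bigl(\sum_i\eta_i\Bigr)^2 \leq \Bigl(\sum_i q_i\Bigr)\sum_i(p_i+1)\eta_i^2,
\]
so the second normalization $\sum_i q_i = 1$ --- equivalent to \eqref{eigenvalue rectangle} --- gives $\langle D^2u(x)v,v\rangle \leq -\mu u(x)|v|^2$ for every $v\in\RN$, and at the same time forces $p_i+1 = \omega_i^2/\mu > 0$.

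It remains to verify that the upper bound is actually attained, so that $\Ppo(D^2 u) + \mu u = 0$. The Cauchy--Schwarz equality requires $\eta_i = cq_i$ for some $c\in\R$, which is realized by $\xi_i = cq_i/f_i'(x_i)$ provided each $f_i'(x_i) \neq 0$. Since $f_i'$ vanishes only at $t = 0$ in $(-\alpha_i^{-1}R, \alpha_i^{-1}R)$, the argument of \eqref{cond}--\eqref{eq3} applies verbatim: on the dense subset of ${\rm Rect}(\alpha)$ where at most one coordinate is zero, a nonzero $\hat v$ realizes equality, and the Lipschitz continuity \eqref{continuity} of $\Ppo$ propagates the equation to the whole rectangle. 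The identification $\mu = \mu_1^+$ then follows from the definition of the principal eigenvalue: existence of a positive eigenfunction vanishing on the boundary gives $\mu_1^+ \geq \mu$, while the standard comparison argument (cf.\ \cite{BNV, BGI}) against $u$ rules out positive supersolutions for any $\mu' > \mu$. The main subtlety is the twofold role of the normalizations: the pointwise condition $q_i\omega_i^2 = \mu$ encodes the anisotropic scaling of the profiles, while the global condition $\sum_i q_i = 1$ both pins down $\mu$ and is precisely what makes the Cauchy--Schwarz bound sharp, yielding equality in $\Ppo(D^2 u) + \mu u \leq 0$.
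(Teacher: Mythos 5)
Your proof is correct and follows the same overall structure as the paper's: product ansatz, reduction to an ODE via $(f^{1/q})''+\omega^2 f^{1/q}=0$, substitution $v_i = f_i(x_i)\xi_i$, and a quadratic form inequality closed off on a dense set by continuity. The one genuine difference is how you dispose of the quadratic form. The paper groups it as $\langle Mw,w\rangle$ where $M$ has $p_i$ on the diagonal and $-1$ off, observes that with $p_i = \kappa\alpha_i^2 - 1$ the matrix $M$ is a rank-one perturbation $-J + \kappa\,\mathrm{diag}(\alpha_i^2)$ of a positive diagonal matrix, and then chooses $\kappa = \sum 1/\alpha_i^2$ to make $\lambda_1(M)=0$. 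You instead write the same quantity as $\sum(p_i+1)\eta_i^2 - (\sum\eta_i)^2$ and close it with a weighted Cauchy--Schwarz $\left(\sum\eta_i\right)^2 \le \left(\sum q_i\right)\sum(p_i+1)\eta_i^2$ using $q_i(p_i+1)=1$, with $\sum q_i = 1$ as the normalization. These are, of course, two views of the same inequality (the variational characterization of the lowest eigenvalue of $M$ \emph{is} that Cauchy--Schwarz bound, with the extremizer recovering the kernel of $M$), and both lead to the same $\kappa$, $p_i$, and $\mu$. Your reparametrization $q_i = 1/(p_i+1)$ with $\omega_i = \pi\alpha_i/(2R)$ makes the bookkeeping a bit lighter and avoids introducing $M$ explicitly.

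One small slip: you say equality is realized \lq\lq on the dense subset where at most one coordinate is zero,\rq\rq\ but the explicit choice $\xi_i = cq_i/f_i'(x_i)$ needs \emph{all} $f_i'(x_i)\neq 0$, i.e.\ $\prod_i x_i \neq 0$ (this is exactly the dense set the paper uses for the rectangle, and it differs from the cube's set $\tilde D$ in \eqref{cond}). The density-plus-continuity conclusion is unaffected, but the set you name does not match the formula you give. Also, it is worth noting the paper itself does not spell out the final identification $\mu=\mu_1^+$ in this theorem; your added comparison argument (positive eigenfunction vanishing on $\partial\Omega$ gives both inequalities via the Berestycki--Nirenberg--Varadhan characterization) is correct and a reasonable addition.
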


Before giving the proof of the theorem let us explicitly remark that, in view of \eqref{eigenvalue cube}, \eqref{eigenvalue rectangle}, the statement \eqref{FK}
reduce to the well know inequality between  harmonic mean and geometric mean:
$$
\frac{N}{\displaystyle{\frac{1}{\alpha_1^2}+\ldots+\frac{1}{\alpha_N^2}}}\leq\sqrt[N]{\alpha_1^2\cdot\ldots\cdot\alpha_N^2}\;.
$$  
The equality occurs if, and only if, the rectangle is a cube. Moreover it is worth to point out that from \eqref{eigenvalue rectangle} we immediately deduce that the infimum of $\mu^+_1$ among all domains with fixed measure is zero.

\begin{proof}[Proof of Theorem \ref{Faber-Krahn}] For $p=(p_1,\ldots,p_N)$ to be fixed and $i=1,\ldots,N$, let us consider the functions
$$
f_i(t)={\left(\cos(\frac{\pi}{2R}t)\right)}^{\frac{1}{p_i+1}}\qquad t\in(-R,R).
$$
Note that 
$$
(f_i^{p_i+1})''(t)+\left(\frac{\pi}{2R}\right)^2f^{p_i+1}(t)=0 \qquad t\in(-R,R)
$$
which yield
\begin{equation}\label{eq6}
(p_i+1)f_i(t)f_i''(t)+p_i(p_i+1)(f_i'(t))^2+\left(\frac{\pi}{2R}\right)^2f_i^2(t)=0.
\end{equation}
Set
$$
u(x)=\prod_{i=1}^Nf_i(\alpha_i x_i)\qquad x\in{\rm Rect}(\alpha),
$$
hence 
$$
\begin{array}{ll}
\displaystyle\partial_{ii}u(x)=\alpha_i^2f_i''(\alpha_ix_i)\prod_{k\neq i}f_k(\alpha_kx_k)&\text{for $i=1,\ldots,N$}\\
\displaystyle\partial_{ij}u(x)=\alpha_i\alpha_jf_i'(\alpha_ix_i)f_j'(\alpha_jx_j)\prod_{k\neq i,j}f_k(\alpha_kx_k)&\text{for $i,j=1,\ldots,N$ and $i\neq j$}.
\end{array}
$$
For $x\in{\rm Rect}(\alpha)$ and $v=(v_1,\ldots,v_N)^\mathsf{T}$ such that $|v|\neq0$ one has
$$
\left\langle D^2u(x)v,v\right\rangle=\sum_{i=1}^N\alpha_i^2f_i''(\alpha_ix_i)\prod_{k\neq i}f_k(\alpha_kx_k)v_i^2+2\sum_{i>j}\alpha_i\alpha_jf_i'(\alpha_ix_i)f_j'(\alpha_jx_j)\prod_{k\neq i,j}f_k(\alpha_kx_k)v_iv_j.
$$
Setting $v_i=f_i(\alpha_ix_i)\xi_i$, $\xi_i\in\R$ for $i=1,\ldots,N$, the previous equality reads as
$$
\left\langle D^2u(x)v,v\right\rangle=\left[\sum_{i=1}^N\alpha_i^2f_i''(\alpha_ix_i)f_i(\alpha_ix_i)\xi_i^2+2\sum_{i>j}\alpha_i\alpha_jf_i'(\alpha_ix_i)f_j'(\alpha_jx_j)\xi_i\xi_j\right]u(x).
$$
Now, using \eqref{eq6}, we obtain
\begin{equation}\label{eq7}
\begin{split}
\left\langle D^2u(x)v,v\right\rangle&=-{\Bigg[}\left(\frac{\pi}{2R}\right)^2\sum_{i=1}^N\frac{\alpha_i^2}{p_i+1}v_i^2+\sum_{i=1}^Np_i(f'_i(\alpha_ix_i)\alpha_i\xi_i)^2\\&\hspace{3cm}-2\sum_{i>j}\alpha_i\alpha_jf_i'(\alpha_ix_i)f_j'(\alpha_jx_j)\xi_i\xi_j{\Bigg]}u(x)\\
&=-\left[\left(\frac{\pi}{2R}\right)^2\sum_{i=1}^N\frac{\alpha_i^2}{p_i+1}v_i^2+\left\langle Mw,w\right\rangle\right]u(x),
\end{split}
\end{equation}
where 
\begin{equation}\label{w}
w=(f'_1(\alpha_1x_1)\alpha_1\xi_1,\ldots,f'_N(\alpha_Nx_N)\alpha_N\xi_N)^\mathsf{T}
\end{equation}
and 
$$M=
\left(
\begin{array}{cccc}
p_1 & -1 & \ldots & -1\\
-1 & p_2 & \ldots & -1\\
\vdots &  \vdots & \ddots & \vdots\\
-1 & \ldots & -1 & p_N
\end{array}
\right).
$$
Our aim is now  to prove that there exist $p_1,\ldots,p_N$ and a positive constant $\kappa$ such that 
\begin{equation}\label{claim}
\begin{split}
\frac{\alpha_i^2}{p_i+1}&=\frac1\kappa\qquad \text{for $i=1,\ldots,N$}\\
\lambda_1(M)=0&\leq\lambda_2(M)\leq\ldots\leq\lambda_N(M).
\end{split}
\end{equation}
Since $p_i=\kappa\alpha_i^2-1$ we obtain
$$
M=-\left(
\begin{array}{ccc}
1 &\ldots & 1\\
\vdots & \cdots & \vdots\\
1 &\ldots & 1
\end{array}
\right)
+\kappa\,\text{diag}(\alpha_1^2,\ldots,\alpha_N^2).
$$
Hence for any $w=(w_1,\ldots,w_N)^\mathsf{T}$
$$
\left\langle Mw,w\right\rangle=-(w_1+\ldots+w_N)^2+\kappa (\alpha_1^2w_1^2+\ldots+\alpha_N^2w_N^2)
$$
and \eqref{claim} follows by taking 
\begin{equation}\label{max}
\kappa=\max_{|w|\neq0}\frac{(w_1+\ldots+w_N)^2}{\alpha_1^2w_1^2+\ldots+\alpha_N^2w_N^2}
={\displaystyle{\frac{1}{\alpha_1^2}+\ldots+\frac{1}{\alpha_N^2}}}.
\end{equation}
Coming back now to \eqref{eq7}, we deduce that
\begin{equation*}
\begin{split}
\left\langle D^2u(x)v,v\right\rangle&=-\left[\left(\frac{\pi}{2R}\right)^2\frac1\kappa |v|^2+\left\langle Mw,w\right\rangle\right]u(x)\\
&\leq-\left(\frac{\pi}{2R}\right)^2\frac1\kappa|v|^2 u(x) \qquad\text{for any $v\in\RN$}.
\end{split}
\end{equation*}
Moreover the equality $\left\langle D^2u(x)v,v\right\rangle=-\left(\frac{\pi}{2R}\right)^2\frac1\kappa|v|^2$ is achieved if $w$, which is given by \eqref{w}, realize the maximum in \eqref{max}. Then if $f'_i(\alpha_ix_i)\neq0$ it is sufficient to take $\xi_1,\ldots,\xi_N$ such that $f'_i(\alpha_ix_i)\alpha_i\xi_i=\frac{1}{\alpha_i^2}$. Since $f_i'(t)=0$ implies $t=0$, we deduce that 
\begin{equation}\label{eq9}
\Ppo(D^2u(x))=\max_{v\in\RN\backslash\left\{0\right\}}\frac{\left\langle D^2u(x)v,v\right\rangle}{|v|}^2=-\frac{1}{\displaystyle{\frac{1}{\alpha_1^2}+\ldots+\frac{1}{\alpha_N^2}}}\left(\frac{\pi}{2R}\right)^2u(x)\qquad \text{if \;\;$\prod_{i=1}^Nx_i\neq0$}.
\end{equation}
By continuity the equality \eqref{eq9} still holds in the whole ${\rm Rect}(\alpha)$.
\end{proof}
 
The previous results show that the  behavior of the principal eigenvalues $\mu(\Delta)$ of the Laplacian $\Delta$ and $\mu^+_1$ of $\Ppo$ is opposite with respect to the symmetry of the domain, at least for square type domains. Note that in ${\rm Rect}(\alpha)$ one has $\mu(\Delta)=(\frac{\pi}{2R})^2\sum_{i=1}^N\alpha_i^2$, while $\mu^+_1=(\frac{\pi}{2R})^2\left(\sum_{i=1}^N\frac{1}{\alpha_i^2}\right)^{-1}$.  This surprising feature can be further strengthened: 
\begin{equation}\label{FK2}
\textbf{\lq\lq\ The ball has a larger principal eigenvalue than the cube having the same measure\ \rq\rq}.\tag{FK2}
\end{equation}
Let us consider  the ball $B_\rho$ of radius $\rho>0$. We know, by \eqref{Pk-Delta}, that 
$$
\mu^+_1(B_\rho)=\left(\frac{\pi}{2\rho}\right)^2,
$$
with $u(x)=\cos(\frac{\pi}{2\rho}|x|)$ as principal eigenfunction. Now if we fix the measure, say equals to $(2R)^N$,
and we take $\rho=2R\omega_N^{-\frac{1}{N}}$, being $\omega_N$ the measure of the unit ball in $\RN$, then
$$
\left|B_\rho\right|=|Q_{2R}|
$$
and 
$$
\mu_1^+(B_\rho)=\left(\frac{\pi}{2R}\right)^2\frac{\omega_N^{\frac{2}{N}}}{4}>\frac{1}{N}\left(\frac{\pi}{2R}\right)^2=\mu_1^+(Q_{2R}).
$$

\subsection{On the principal eigenvalue for the intersection of rectangles} As was said in the introduction, in \cite{L} Lieb showed that if $A, B\subset\Rn$ are two bounded domains, then 
\begin{equation}\label{L1}
\inf_{x\in\Rn}\mu\left(\Delta,A\cap B_x\right)<\mu(\Delta,A)+\mu(\Delta, B).
\end{equation}
We now show that the inequality \eqref{L1} is not true in general for $\mu^+_1$ , actually it is reversed if $A$ and $B$ are some specific rectangles. Let us assume $N=2$ for simplicity and let
$$
A=\left(-\frac{R}{\alpha_1},\frac{R}{\alpha_1}\right)\times\left(-\frac{R}{\alpha_2},\frac{R}{\alpha_2}\right),\quad B=\left(-\frac{R}{\alpha_2},\frac{R}{\alpha_2}\right)\times\left(-\frac{R}{\alpha_1},\frac{R}{\alpha_1}\right).
$$
Without loss of generality we may suppose $\alpha_1\leq\alpha_2$. Then using \eqref{eigenvalue cube} one has
$$
\inf_{x\in \Rn}\mu^+_1(A\cap B_x)=\mu^+_1\left(\left(-\frac{R}{\alpha_2},\frac{R}{\alpha_2}\right)^2\right)=\frac{\alpha_2^2}{2}\left(\frac{\pi}{2R}\right)^2,
$$
whereas 
$$
\mu^+_1(A)=\mu^+_1(B)=\frac{\alpha_1^2\alpha_2^2}{\alpha_1^2+\alpha_2^2}\left(\frac{\pi}{2R}\right)^2
$$
in view of \eqref{eigenvalue rectangle}. In this way if we choose $\alpha_2^2>3\alpha_1^2$, then
\begin{equation}\label{L2}
\inf_{x\in\Rn}\mu^+_1(A\cap B_x)>\mu^+_1(A)+\mu^+_1(B).
\end{equation}
In higher dimension, $N\geq3$, let us consider $\alpha=(\alpha_1,\alpha_2,\ldots,\alpha_N)$, $\tilde\alpha=(\alpha_2,\alpha_1,\ldots,\alpha_N)$ and $\tilde{\tilde\alpha}=(\alpha_2,\alpha_2,\ldots,\alpha_N)$ with $0<\alpha_1\leq\ldots\leq\alpha_N$. Set
\begin{equation*}
A={\rm Rect}(\alpha),\quad B={\rm Rect}(\tilde\alpha).
\end{equation*}
Then 
$$
\inf_{x\in\Rn}\mu^+_1(A\cap B_x)=\mu^+_1\left({\rm Rect}(\tilde{\tilde\alpha})\right)=\frac{1}{\frac{2}{\alpha_2^2}+\sum_{i=3}^N\frac{1}{\alpha_i^2}}
$$
and so \eqref{L2} is satisfied by choosing $\alpha\in\Rn$ in such a way
$$
\frac{1}{\alpha_1^2}>\frac{3}{\alpha_2^2}+\sum_{i=3}^N\frac{1}{\alpha_i^2}.
$$

\section{Application: H\"older continuity in convex domains}  \label{regh}
We  study the global H\"older continuity of viscosity solutions of 
\begin{equation}\label{Dirichlet}
\left\{
\begin{array}{rl}
\Ppo(D^2u)=f(x) & \text{in $\Omega$}\\
u=0\;\;\;\;\; & \text{on $\partial \Omega$}
\end{array}
\right.
\end{equation}
where $\Omega\subset\Rn$ is a bounded convex domain and $f$ is a continuous and bounded function in $\Omega$.

For notational simplicity let $Q\equiv Q_\pi$ and $Q(y)$ be respectively the $N$-dimensional open cubes with centers $0$ and $y\in\Rn$  and side length $\pi$, i.e. 
$$
Q=\left(-\frac\pi2,\frac\pi2\right)^N,\qquad Q(y)=\prod_{i=1}^N\left(y_i-\frac\pi2,y_i+\frac\pi2\right).
$$
By convexity and rescaling, the domain  $\Omega$ may be expressed as \lq\lq intersection of cubes\rq\rq\ of side length $\pi$: there exist a subset ${\mathcal C}$ of $Y\times{\mathcal O}$, ${\mathcal O}$ being the set of $N\times N$ orthogonal matrices and $Y\subset\Rn$, such that 
\begin{equation}\label{representation}
\Omega=\bigcap_{(y,O)\in {\mathcal C}}O Q(y),
\end{equation}
where $OQ(y)=\left\{Ox\,:\, x\in Q(y)\right\}$.\\
Let us denote by $\phi(x)=\displaystyle\prod_{i=1}^N\sqrt[N]{\cos x_i}$ the eigenfunction provided Theorem \ref{theorem1}. Note that for any $(y,O)\in {\mathcal C}$, the function $\phi_{y,O}(x)=\phi(O^\mathsf{T}x-y)$ solves 
\begin{equation}\label{eigenfunctiony}
\left\{
\begin{array}{rl}
\Ppo(D^2\phi_{y,O})+\frac{1}{N}\phi_{y,O}=0 & \text{in $OQ(y)$}\\
\phi_{y,O}=0& \text{on $\partial(OQ(y))$}.
\end{array}
\right.
\end{equation} 
Moreover for any $x,z\in \overline{OQ(y)}$ one has
\begin{equation}\label{Holder condition}
\left|\phi_{y,O}(x)-\phi_{y,O}(z)\right|\leq\left(\sqrt{N}|x-z|\right)^{\frac{1}{N}}.
\end{equation}

\begin{theorem}[\textbf{H\"older}]\label{holder}
Let $\Omega$ be given by \eqref{representation}.
%Let $u\in C(\overline\Omega)$ be a viscosity solution of the boundary value problem \eqref{Dirichlet} in $\Omega$ bounded convex domain.
If there exist  $\alpha>0$ and $\beta\in(0,1]$ such that 
\begin{equation}\label{condition}
f(x)\geq-\alpha \left(\inf_{{\mathcal C}}\phi_{y,O}(x)\right)^\beta\qquad\forall x\in\Omega,
\end{equation}
then there exits a unique viscosity solution $u$ of \eqref{Dirichlet}. Moreover $u\in C^{0, \frac\beta N}(\overline\Omega)$ and  the H\"older norm of $u$ depends only on $\alpha$, $\beta$, $N$ and the $L^\infty$ norms of $u$ and $f$.  
\end{theorem}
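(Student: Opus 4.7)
My plan is to construct a H\"older supersolution from the eigenfunctions of Theorem \ref{theorem1}, combine it with a simple quadratic subsolution via Perron's method to produce the viscosity solution, and then extract global H\"older regularity by combining two-sided barrier bounds with the semi-concavity of $u$. The central object would be
$$\overline u(x) := \frac{N\alpha}{\beta}\inf_{(y,O)\in\mathcal C}\phi_{y,O}^\beta(x), \qquad x\in\overline\Omega.$$
Since $\Omega=\bigcap OQ(y)$ by \eqref{representation}, every $x_0\in\partial\Omega$ lies in $\partial(OQ(y))$ for some $(y,O)\in\mathcal C$, so $\overline u$ vanishes on $\partial\Omega$; raising the H\"older estimate \eqref{Holder condition} to the power $\beta\in(0,1]$ and taking the infimum shows $\overline u\in C^{0,\beta/N}(\overline\Omega)$. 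To verify the supersolution property, I would compute $D^2\phi_{y,O}^\beta = \beta\,\phi_{y,O}^{\beta-1}\,D^2\phi_{y,O} + \beta(\beta-1)\,\phi_{y,O}^{\beta-2}\,D\phi_{y,O}\otimes D\phi_{y,O}$, observe that the rank-one term is negative semidefinite since $\beta(\beta-1)\leq 0$, and combine the sub-additivity of $\Ppo$ with $\Ppo(D^2\phi_{y,O})=-\phi_{y,O}/N$ from \eqref{eigenfunctiony} to get $\Ppo(D^2\phi_{y,O}^\beta)\leq -(\beta/N)\phi_{y,O}^\beta$; since infima of viscosity supersolutions are supersolutions, this and hypothesis \eqref{condition} with the choice $N\alpha/\beta$ yield $\Ppo(D^2\overline u)\leq -\alpha\inf\phi_{y,O}^\beta\leq f$ in $\Omega$.

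Paired with the quadratic subsolution $\underline u_0(x):=(\|f\|_\infty/2)(|x-x_\Omega|^2-R^2)$, where $\Omega\subset B_R(x_\Omega)$, Perron's method and the comparison principle for the proper degenerate elliptic operator $\Ppo$ would produce a unique viscosity solution $u\in C(\overline\Omega)$ of \eqref{Dirichlet} with $\underline u_0\leq u\leq\overline u$; in particular the upper bound $u\leq\overline u$ gives $u(x)\leq C\,\dist(x,\partial\Omega)^{\beta/N}$. The main obstacle is the matching lower bound $u(x)\geq -C\,\dist(x,\partial\Omega)^{\beta/N}$, since the one-sided structure $\Ppo=\lambda_N$ and the identity $\Ppo(-X)=-\Pmo(X)$ prevent using $-\overline u$ directly as a sub-barrier: the quantity $\Pmo(D^2\phi_{y,O}^\beta)$ is not well-controlled near the boundary. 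To overcome this, I would construct, at each $x_0\in\partial\Omega$, a local sub-barrier of the form $\underline u_{x_0}(x)=-A\,\phi_{y_0,O_0}^\beta(x)+B(|x-x_\Omega|^2-R^2)$, with $(y_0,O_0)\in\mathcal C$ such that $x_0\in\partial(O_0Q(y_0))$, choosing $A,B$ depending only on $\alpha,\beta,N,\|f\|_\infty$ via an eigenvalue analysis (Lemma \ref{linear algebra} applied on the diagonal of the rotated cube $O_0Q(y_0)$) so that the quadratic correction absorbs the negative contribution from $\Pmo(D^2\phi^\beta_{y_0,O_0})$. Local comparison would then deliver $u(x)\geq -C|x-x_0|^{\beta/N}$, yielding $|u(x)|\leq C\,\dist(x,\partial\Omega)^{\beta/N}$ throughout $\Omega$.

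To conclude the global H\"older estimate, I would use that $\Ppo(D^2 u)\leq\|f\|_\infty$ implies $u-(\|f\|_\infty/2)|x|^2$ is concave in the viscosity sense, so $u$ is semi-concave with constant $\|f\|_\infty$. A standard interpolation between semi-concavity (which, combined with the two-sided boundary H\"older bound $|u(x)|\leq C\,\dist(x,\partial\Omega)^{\beta/N}$, yields interior Lipschitz estimates of order $\dist(x,\partial\Omega)^{\beta/N-1}$ on balls of radius $\lesssim\dist(x,\partial\Omega)$) and the boundary bound then yields $|u(x)-u(y)|\leq C|x-y|^{\beta/N}$ throughout $\overline\Omega$, with $C$ depending only on $\alpha,\beta,N,\|u\|_\infty$, and $\|f\|_\infty$.
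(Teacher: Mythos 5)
Your existence step and the choice of supersolution $\overline u=\frac{N\alpha}{\beta}\inf_{\mathcal C}\phi_{y,O}^{\beta}$ match the paper exactly, and the subadditivity computation $\Ppo(D^2\phi^\beta)\le-\frac{\beta}{N}\phi^\beta$ is the same estimate the paper records in \eqref{4eq1}. For the regularity, however, you take a genuinely different route. The paper runs a doubling argument: it assumes $\sup_{\overline\Delta_\delta}\bigl(u(x)-u(y)-M|x-y|^{\beta/N}\bigr)>0$, rules out $|x_0-y_0|=\delta$ and $y_0\in\partial\Omega$ by the choice of $M$ and by comparison with the \emph{upper} barrier $\psi=\frac{N\alpha}{\beta}\phi^\beta_{\tilde y_0,\tilde O}$, and then tests $u$ as a viscosity supersolution at $y_0$ against $\varphi(y)=-M|y-x_0|^{\beta/N}$, whose Hessian has $\Ppo(D^2\varphi)=M\frac{\beta}{N}\bigl(1-\frac{\beta}{N}\bigr)|y_0-x_0|^{\beta/N-2}$, contradicting \eqref{delta}. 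Notably, that proof never needs a \emph{lower} barrier: the one-sided structure of $\Ppo$ is exploited only through the supersolution test. Your scheme instead tries to produce two-sided boundary bounds $|u|\le C\dist(\cdot,\partial\Omega)^{\beta/N}$ and then interpolate using the concavity of $u-\frac{\|f\|_\infty}{2}|x|^2$; if you had both bounds the interpolation would indeed close (split into $\min(d(x),d(y))\gtrless 2|x-y|$, use the subgradient bound $|\nabla v(z)|\le\operatorname{osc}_{B_{d(z)}(z)}v/d(z)\lesssim d(z)^{\beta/N-1}+1$ in the interior case).

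The gap is in the lower barrier. The function $\underline u_{x_0}(x)=-A\phi_{y_0,O_0}^{\beta}(x)+B\bigl(|x-x_\Omega|^2-R^2\bigr)$ is certainly a subsolution (indeed $\Ppo(D^2\underline u_{x_0})=-A\,\lambda_1(D^2\phi^\beta_{y_0,O_0})+2B\ge 2B$, so $2B\ge\|f\|_\infty$ suffices without any eigenvalue analysis of $\Pmo(D^2\phi^\beta)$), and $\underline u_{x_0}\le 0=u$ on $\partial\Omega$ so comparison gives $u\ge\underline u_{x_0}$ in $\Omega$. But $\underline u_{x_0}(x_0)=B\bigl(|x_0-x_\Omega|^2-R^2\bigr)$, which is a \emph{fixed strictly negative constant} unless $x_0$ happens to lie on $\partial B_R(x_\Omega)$; for a cube-type $\Omega$ inscribed in $B_R(x_\Omega)$ most boundary points are strictly inside that sphere. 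Hence comparison only gives $u(x)\ge\underline u_{x_0}(x)\ge B\bigl(|x_0-x_\Omega|^2-R^2\bigr)-A\bigl(\sqrt N|x-x_0|\bigr)^{\beta/N}-O(|x-x_0|)$, which does \emph{not} yield $u(x)\ge-C|x-x_0|^{\beta/N}$: the constant offset $B\bigl(|x_0-x_\Omega|^2-R^2\bigr)$ does not vanish as $x\to x_0$. You would need a barrier that actually touches zero at $x_0$; for a convex $\Omega$ a half-space barrier of the form $-A\langle x-x_0,\nu\rangle^{\beta/N}$, with $\nu$ the inward normal of a supporting hyperplane at $x_0$, does the job (its Hessian is rank one and $\Ppo$ of it is $A\frac{\beta}{N}(1-\frac{\beta}{N})\langle x-x_0,\nu\rangle^{\beta/N-2}\ge\|f\|_\infty$ for $A$ large, using $\operatorname{diam}\Omega\le\sqrt N\pi$). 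So the route is repairable, but the specific barrier you propose fails, and it is worth knowing that the paper sidesteps the lower barrier entirely by working directly with the doubling quantity.
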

\begin{proof}
The existence and uniqueness of $u$ follows from Perron's method,  \cite[Theorem 4.1]{CIL}. For this, note that the construction of a continuous subsolution $\underline u$ of \eqref{Dirichlet}, with general $f$, is standard under the uniform exterior sphere (or cone) condition of $\Omega$. On the other hand, owing to the degeneracy of the operator $\Ppo$ with respect inf type operations,   the equivalent argument used for subsolutions actually fails for the construction of  supersolutions null on the boundary.  This is the point where the assumption \eqref{condition} is used. 
For any $x\in\overline\Omega$ let
$$
\overline u(x)=\frac{N\alpha}{\beta}\inf_{\mathcal C}\phi^\beta_{y,O}(x).
$$
Using \eqref{Holder condition}, then for any $x,z\in\overline\Omega$ one has
\begin{equation*}
\begin{split}
\left|\overline u(x)-\overline u(z)\right|&\leq \frac{N\alpha}{\beta}\sup_{\mathcal C}\left|\phi^\beta_{y,O}(x)-\phi^\beta_{y,O}(z)\right|\\
&\leq \frac{N\alpha}{\beta}\sup_{\mathcal C}\left|\phi_{y,O}(x)-\phi_{y,O}(z)\right|^\beta\leq \frac{N\alpha}{\beta}\left(\sqrt{N}|x-z|\right)^{\frac{\beta}{N}}.
\end{split}
\end{equation*}
Hence $\overline u\in C^{0,\frac\beta N}(\overline\Omega)$. Moreover for any $(y,O)\in\mathcal C$ and any $x\in\Omega$
\begin{equation}\label{4eq1}
\begin{split}
\Ppo\left(D^2\frac{N\alpha}{\beta}\phi^\beta_{y,O}(x)	\right)&\leq N\alpha(\beta-1)\phi_{{y,O}}^{\beta-2}(x)\Pmo(D\phi_{y,O}(x)\otimes D\phi_{y,O}(x))\\&\;\quad+N\alpha \phi_{y,O}^{\beta-1}(x)\Ppo(D^2\phi_{{y,O}}(x))\\
&= N\alpha\phi_{y,O}^{\beta-1}(x)\Ppo(D^2\phi_{{y,O}}(x))\\
&=-\alpha\phi_{y,O}^\beta(x)\\& \leq f(x).
\end{split}
\end{equation}
Then $\overline u$, which is the infimum of all  $\phi_{y,O}$, is in turn a supersolution of \eqref{Dirichlet}. Moreover  $\overline u=0$ on $\partial\Omega$. Hence the Perron's method provides existence and uniqueness for \eqref{Dirichlet}.

Let us prove now that the solution $u\in C^{0, \frac\beta N}(\overline\Omega)$. Without loss of generality we may assume $u\not\equiv0$. \\
Let $\Delta_\delta=\left\{(x,y)\in\Omega\times\Omega\,:\,|x-y|<\delta\right\}$ where $\delta$ is a positive number  such that 
\begin{equation}\label{delta}
2\left\|u\right\|_\infty\frac\beta N\left(1-\frac\beta N\right)\delta^{-2}>\left\|f\right\|_\infty.
\end{equation}
Set $M=\max \left(\frac{N^{1+\frac{\beta}{2N}}\alpha}{\beta},\frac{2\left\|u\right\|_\infty}{\delta^\frac\beta N}\right)$.  We assume by contradiction that 
\begin{equation}\label{eq10}
0<\max_{\overline\Delta_\delta}\left\{u(x)-u(y)-M|x-y|^\frac\beta N\right\}=u(x_0)-u(y_0)-M|x_0-y_0|^\frac\beta N.
\end{equation}
In particular $x_0\neq y_0$. If $|x_0-y_0|=\delta$ then
\begin{equation*}
0<u(x_0)-u(y_0)-M|x_0-y_0|^\frac\beta N\leq2\left\|u\right\|_\infty- M\delta^\frac\beta N\leq0,
\end{equation*}
by the choice of $M$. 
If $y_0\in\partial\Omega$, then there exist $(\tilde{y}_0,\tilde O)\in{\mathcal C}$ such that $\Omega\subset \tilde OQ(\tilde{y}_0)$ and $y_0\in\partial(\tilde OQ(\tilde{y}_0))$. As in \eqref{4eq1},  the function $\psi(x)=\frac{N\alpha}{\beta}\phi_{\tilde{y}_0,\tilde O}^\beta(x)$ satisfies in $\Omega$ the inequality  
$\Ppo(D^2\psi)\leq f(x)$.
Moreover $\psi\geq0$ on $\partial\Omega$. By comparison $u\leq\psi$ in $\overline\Omega$, hence 
\begin{equation}\label{contr}
0<u(x_0)-u(y_0)-M|x_0-y_0|^{\frac\beta N}\leq \psi(x_0)-\psi(y_0)-M|x_0-y_0|^{\frac\beta N}\leq0,
\end{equation}
in view of \eqref{Holder condition} and  the choice of $M$.
The above contradictions imply that  $(x_0,y_0)\in\Delta_\delta$ or $(x_0,y_0)\in\partial\Omega\times\Omega$. From \eqref{eq10} we deduce that $u-\varphi$ has a local minimum at $y_0$ with $\varphi(y)=-M|y-x_0|^\frac\beta N$. Then  
\begin{equation*}
%\begin{split}
f(y_0)\geq\Ppo(D^2\varphi(y_0))\geq M\frac\beta N\left(1-\frac\beta N\right)\delta^{\frac\beta N-2}\geq 2\left\|u\right\|_\infty\frac\beta N\left(1-\frac\beta N\right)\delta^{-2}
%\end{split}
\end{equation*}
and this contradicts \eqref{delta}.  
\end{proof}

\begin{remark}
Following the argument of the previous proof and looking at \eqref{contr},  it is clear that the global $C^{0,\gamma}$ H\"older continuity, $\gamma\in(0,1)$, is still true for any  nonpositive  supersolutions $u$ of \eqref{Dirichlet} without assuming the convexity of $\Omega$
%(this fact has been already pointed out in \cite{BGI}).
 (note that $u\leq0$ forces $f$ to be nonnegative somewhere). On the other hand the global regularity fails if instead we consider nonnegative supersolutions.  For example let us consider the nonnegative continuous function
$$
u(x)=\left\{
\begin{array}{cl}
\displaystyle\frac{1}{\sigma-\sum_{i=1}^N\log(\cos x_i)} & \text{if $x\in Q$}\\
0 & \text{if $x\in\partial Q$}.
\end{array}
\right.
$$
We are going to choose $\sigma$ in such a way that $u$ be concave, in particular $\Ppo(D^2u)\leq0$ in $Q$. 
Let $v(x):=\sum_{i=1}^{N}g(x_i):=\sum_{i=1}^N\log(\cos x_i)$, then $\nabla u=\frac{1}{(\sigma-v)^2}\nabla v$ while
$$D^2u=\frac{1}{(\sigma-v)^3}(2\nabla v\otimes\nabla v+(\sigma-v)D^2v).$$ And then, since $(D^2v)_{ij}=\delta_{ij}g''(x_i)$, for any $w\in\RN$
\begin{equation*}
\begin{split}
\left\langle D^2u(x)w,w\right\rangle& =\frac{1}{(\sigma-v)^3}(2(\nabla v\cdot w)^2+(\sigma-v)\sum_{i=1}^Ng''(x_i)w_i^2)\\
&=\frac{1}{(\sigma-v)^3}\left(2\left(\sum_{i=1}^N\tan(x_i)w_i\right)^2-(\sigma-v)\sum_{i=1}^N\frac{1}{\cos^2(x_i)}w_i^2\right)\\
&\leq\frac{1}{(\sigma-v)^3}((2N-\sigma)\sum_{i=1}^N\frac{1}{\cos^2(x_i)}w_i^2)\\
&=0\qquad\text{if $\sigma=2N$}.
\end{split}
\end{equation*}
 On the other hand for any $\gamma\in(0,1]$
$$\sup_{\substack{x,y\in \overline Q\\x\neq y}}\frac{|u(x)-u(y)|}{|x-y|^\gamma}=+\infty.$$
\end{remark}

\end{document}